\newtheorem*{theorem*}{Theorem}
\newtheorem{lemma}{Lemma} 
\newtheorem*{lemma*}{Lemma} 
\newtheorem*{corollary*}{Corollary}
\newcommand{\zero}{{\mathbf 0}}
\theoremstyle{definition} 
\newtheorem*{remark*}{Remark}
\newtheorem*{definition*}{Definition}
\newtheorem*{example*}{Example}
\begin{document}

\title[On the cardinality of Extremally Disconnected Groups]{On the cardinality\\ 
of Extremally Disconnected Groups\\ with Linear Topology} 
\author{Ol'ga Sipacheva}

\begin{abstract}
A group topology is said to be linear if open subgroups form a base of 
neighborhoods of the identity element. It is proved that the existence of a nondiscrete extremally 
disconnected group of Ulam nonmeasurable cardinality with linear topology implies that of a 
nondiscrete extremally disconnected group of cardinality at most $2^\omega$ with linear topology. 
\end{abstract}

\keywords{Topological group, extremally disconnected, group with linear topology}

\subjclass[2020]{54H11, 54G05, 03E35}

\address{Department of General Topology and Geometry, Faculty of Mechanics and  Mathematics, 
M.~V.~Lomonosov Moscow State University, Leninskie Gory 1, Moscow, 199991 Russia}

\email{o-sipa@yandex.ru, osipa@gmail.com}

\maketitle

A topological space $X$ is said to be \emph{extremally disconnected} if the closure of any open set 
in $X$ is open. Extremally disconnected spaces have the following obvious properties:
\begin{enumerate}
\item
Any two disjoint open sets in an extremally disconnected space have disjoint closures. 
\item
Any open subset of an extremally disconnected space is extremally 
disconnected.
\item
The image of an extremally disconnected space under an open 
continuous map is extremally disconnected (this follows from the observation that, for any open 
continuous map $f\colon X\to Y$ and any $A\subset Y$, we have $\overline 
A=f(\overline{f^{-1}(A)})$).
\end{enumerate}
Yet another, not so obvious, property is the following theorem of Isbell~\cite{Isbell}:
\begin{enumerate}
\item[(4)]
A nondiscrete extremally space of Ulam 
nonmeasurable cardinality cannot be a $P$-space, i.e., must contain a nonopen $G_\delta$ 
set.
\end{enumerate} 
(Recall that a cardinal $\kappa$ is Ulam measurable if it carries a $\sigma$-complete (i.e., closed 
under countable intersections) nonprincipal ultrafilter on $\kappa$. The least Ulam measurable 
cardinal is measurable, so the nonexistence of Ulam measurable cardinals is consistent with ZFC, 
while the consistency of their existence cannot be derived from ZFC.)

The problem of the existence in ZFC of a nondiscrete extremally disconnected 
group~\cite{Arhangelskii} is still unsolved. However, it is known that the nonexistence of a 
countable nondiscrete extremally disconnected group is consistent with ZFC \cite{RS}. In this note, 
we prove that if there exists an extremally disconnected group of Ulam 
nonmeasurable cardinality which contains a countable family of open subgroups with nonopen 
intersection, then there exists a Boolean nondiscrete extremally disconnected group of cardinality 
at most~$2^\omega$. 

In what follows we use Malykhin's theorem that any extremally disconnected group 
contains an open Boolean subgroup~\cite{Malykhin}. 
Recall that a Boolean group is a group in which all nonidentity elements are of order~2. It is easy 
to see that all such groups are Abelian. Any Boolean group $B$ can be treated as a vector space over 
the two-element field $\mathbb F_2$ (and hence all Boolean groups of the same cardinality $\kappa$ 
are isomorphic to each other and to the direct sum $\bigoplus^\kappa \mathbb Z_2$  of $\kappa$ 
copies of the two-element group $\mathbb Z_2$). If $E$ is a basis of $B$, then $B$ is the set 
$[E]^{<\omega}$ of all finite subsets of $E$, and the group operation of $B$ is symmetric 
difference: $g+h=g\vartriangle h$ for $g,h\in B=[E]^{<\omega}$. The zero element of $B$ is the empty 
set. When considering the group operation (addition) on subsets of $B$ represented as 
$[E]^{<\omega}$, we use the standard notation $+$, while for the addition of elements, we often use 
the symmetric difference symbol $\vartriangle$. Thus, for $X,Y\subset B=[E]^{<\omega}$, 
$X+Y=\{x\vartriangle y: x\in X, y\in Y\}$.

\begin{theorem*}
Let $G$ be a Boolean extremally disconnected group with zero element $\zero$ containing a countable 
family of $\{H_n:n\in\omega\}$  of open subgroups such that $\bigcap H_n=\{\zero\}$. Then $G$ has an 
open subgroup which admits a continuous isomorphism onto a subgroup of the countable Cartesian 
product of discrete countable Boolean groups with the product topology. In particular, any such 
group $G$ contains an open subgroup of cardinality at most $2^\omega$.  
\end{theorem*}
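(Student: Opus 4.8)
\emph{Step 1 (set‑up and the goal).} Replacing $H_n$ by $H_0\cap\dots\cap H_n$ I may assume that $\{H_n\}$ is decreasing; put also $H_{-1}:=G$. The diagonal homomorphism
$q\colon G\to\prod_{n\in\omega}G/H_n$ (each $G/H_n$ discrete, the product with the product topology) is continuous because the $H_n$ are open, and injective because $\bigcap_n H_n=\{\zero\}$. If $G'$ is an open subgroup of $G$ with $[G':G'\cap H_n]\le\omega$ for every $n$, then $q$ restricts to a continuous monomorphism of $G'$ into $\prod_n G'/(G'\cap H_n)$, a countable product of discrete countable Boolean groups, whence $|G'|\le\aleph_0^{\,\omega}=2^\omega$. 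So it suffices to produce such a $G'$. I claim it is in fact enough to find an open subgroup $G'$ with $[G'\cap H_{n-1}:G'\cap H_n]\le 2^\omega$ for every $n\ge 0$.

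\emph{Step 2 (from bounded gaps to a countable‑index separating family).} Suppose such a $G'$ is given. Each $V_n:=(G'\cap H_{n-1})/(G'\cap H_n)$ is a Boolean group, i.e.\ an $\mathbb F_2$‑vector space, of cardinality $\le 2^\omega$, hence it injects linearly into $\mathbb F_2^{\,\omega}$; composing with the coordinate functionals we obtain homomorphisms $\psi_{n,k}\colon G'\cap H_{n-1}\to\mathbb F_2$ $(k\in\omega)$, each vanishing on $G'\cap H_n$, which jointly separate the points of $V_n$. Extend each $\psi_{n,k}$ to a homomorphism $\widetilde\psi_{n,k}\colon G'\to\mathbb F_2$ (choose a linear complement of $G'\cap H_{n-1}$ in $G'$). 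Then $L_{n,k}:=\ker\widetilde\psi_{n,k}$ is a subgroup of $G'$ of index $\le 2$ containing the open subgroup $G'\cap H_n$, hence open in $G'$; moreover $\bigcap_{n,k}L_{n,k}=\{\zero\}$, since any $g\in G'\setminus\{\zero\}$ lies in $(G'\cap H_{n-1})\setminus(G'\cap H_n)$ for the least $n$ with $g\notin H_n$, so its image in $V_n$ is nonzero and $\widetilde\psi_{n,k}(g)=\psi_{n,k}(g)=1$ for some $k$. Now $\{L_{n,k}:n,k\in\omega\}$ is a countable family of open subgroups of $G'$ of countable index with trivial intersection, and applying Step~1 to $G'$ equipped with this family yields the theorem. (In particular, if every gap $[H_{n-1}:H_n]$ of the original family is already $\le 2^\omega$ — for instance if $|G|\le 2^\omega$ — one may take $G'=G$.)

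\emph{Step 3 (the heart: producing $G'$, and where extremal disconnectedness is used).} It remains to find an open subgroup all of whose successive gaps have size $\le 2^\omega$. Assume not. Then every open subgroup $G''$ has some $n$ with $[G''\cap H_{n-1}:G''\cap H_n]>2^\omega$, and a direct recursion produces a strictly decreasing sequence of open subgroups $G=K_0\supsetneq K_1\supsetneq\cdots$ with $\bigcap_j K_j=\{\zero\}$ and $[K_j:K_{j+1}]>2^\omega$ for all $j$: at stage $j$ one passes from $K_j=H_{m_{j-1}}$ to $K_{j+1}=H_{m_j}$, where $m_j>m_{j-1}$ is a level at which the relevant gap of $K_j$ exceeds $2^\omega$. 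Thus everything reduces to the assertion that \emph{an extremally disconnected Boolean group contains no such sequence}, and this is the step I expect to be the main obstacle. The natural approach is to use the $>2^\omega$ cosets of $K_{j+1}$ inside $K_j$, together with the completeness of the clopen algebra of $G$ (every union of cosets of an open subgroup is clopen, and closures of open sets are open) and the absence of nontrivial convergent sequences, to build two disjoint open subsets of $G$ whose closures both contain $\zero$, contradicting property~(1). The delicate point — and the reason the abundance of cosets is needed — is that the sequence $\{K_j\}$ need not be a neighbourhood base at $\zero$, so the two open sets and the verification that each of their closures reaches $\zero$ must be assembled with care, exploiting the group operation and the extra room available at every level.
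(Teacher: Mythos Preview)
Your proposal is incomplete: Step~3, the only place where extremal disconnectedness enters, is explicitly left undone, and your sketch (``use the abundance of cosets to build two disjoint open sets whose closures both contain $\zero$'') does not supply the combinatorial mechanism. That mechanism is precisely the content of the paper's key lemma, and it is the whole difficulty. The paper's construction is explicit: fix a basis $E_k$ of each $D_k=G/H_k$; for $(g_i)\in G\subset\prod_i D_i$ with $g_i=\zero_i$ for $i<k$ and $g_k\ne\zero_k$, place $(g_i)$ in $W_0$ or $W_1$ according to the parity of the number of $2$'s in the prime factorization of $|g_k|$ (the cardinality of $g_k$ as a finite subset of $E_k$); set $W_\delta=\bigcup_k W_\delta^k$. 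Given a neighborhood $U$, take $V$ with $8V\subset U$ and $k$ least with $\pi_k(V)$ uncountable; a pigeonhole on coordinates $<k$ plus a single translation produces $W\subset 2V$ with $\pi_i(W)=\{\zero_i\}$ for $i<k$ and $\pi_k(W)$ still uncountable and missing $\zero_k$. Apply the $\Delta$-system lemma to $\pi_k(W)\subset[E_k]^{<\omega}$ to get an uncountable $\Delta$-system $\mathscr F$ with root $R$ and common size $m$. For distinct $F,G\in\mathscr F$ one has $|F\vartriangle G|=2(m-|R|)$, and for two disjoint such pairs the fourfold symmetric difference has size $4(m-|R|)$; these two cardinalities have $2$-adic valuations of opposite parity, and the corresponding group elements lie in $4V\subset U$ and $8V\subset U$ respectively. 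Thus $U$ meets both $W_0$ and $W_1$.

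Two further points. First, the paper's threshold is \emph{uncountable}, not $>2^\omega$: the $\Delta$-system lemma requires only $\omega_1$ many finite sets. Consequently the negation of the lemma yields directly a neighborhood $U$ with every $\pi_n(U)$ countable, and $\langle U\rangle$ is the desired open subgroup with countable image in each factor; your Step~2 detour (embedding a Boolean group of size $\le 2^\omega$ into $\mathbb F_2^{\,\omega}$ to manufacture a countable family of open subgroups of countable index) is therefore unnecessary. Second, your worry that the chain need not be a neighborhood base at $\zero$ is exactly on target, and is why the paper lets the level $k$ depend on the given neighborhood $U$ and defines $W_\delta$ as a union over all $k$.
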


\begin{lemma}
\label{lemma1}
If a topological group $G$ with identity element $1$ contains open subgroups $G=H_0\supset 
H_1\supset H_2\supset\dots$ such that $\bigcap H_n=\{1\}$, then $G$ admits a continuous isomorphism 
onto a subgroup of the Cartesian product $\prod D_n$ of discrete groups $D_n\cong G/H_n$ with the 
product topology. 
\end{lemma}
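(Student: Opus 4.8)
The plan is to construct the required map directly, as the diagonal of the quotient homomorphisms $G\to G/H_n$. First, since the statement treats $G/H_n$ as groups, I take each $H_n$ to be a normal open subgroup (this is automatic when $G$ is Abelian, which is the only case used in the theorem). Then $D_n:=G/H_n$ is a group, the canonical projection $\pi_n\colon G\to D_n$ is a group homomorphism, and --- because $H_n$ is open, so that every coset of $H_n$ is open in $G$ --- the group $D_n$ carries the discrete topology and $\pi_n$ is continuous (in fact open).

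Next I would define $\varphi\colon G\to\prod_{n\in\omega}D_n$ by $\varphi(g)=(\pi_n(g))_{n\in\omega}$. As a product of homomorphisms, $\varphi$ is a homomorphism; and since the target carries the product topology and each coordinate map $\pi_n$ is continuous, $\varphi$ is continuous --- the $\varphi$-preimage of a basic open set of the product involves only finitely many coordinates and is therefore a finite intersection of open subsets of $G$. Moreover $\ker\varphi=\bigcap_n\ker\pi_n=\bigcap_n H_n=\{1\}$, so $\varphi$ is injective. Hence $\varphi$ restricts to a group isomorphism of $G$ onto the subgroup $\varphi(G)\le\prod_n D_n$, and this isomorphism is continuous; since each $D_n\cong G/H_n$ is discrete, this is exactly the assertion of the lemma.

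As for the main obstacle: there is essentially none --- the lemma is a routine repackaging of the first isomorphism theorem together with the definition of the product topology. The two points that deserve a moment's attention are that the $D_n$ must be equipped with the \emph{discrete} topology (so that $\prod_n D_n$ with the product topology is the intended target) and that $\varphi$ is claimed to be only a \emph{continuous} isomorphism, not a topological embedding: $\varphi$ need not be open onto its image, because the $H_n$ are not assumed to form a base of neighborhoods of $1$, so the topology of $G$ may be strictly finer than the one pulled back from $\prod_n D_n$. (When the $H_n$ do form such a base, the same $\varphi$ is in addition an open map onto $\varphi(G)$, hence a topological isomorphism, but the lemma does not require this.)
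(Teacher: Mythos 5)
Your proposal is correct and follows essentially the same route as the paper: form the diagonal of the canonical quotient homomorphisms $h_n\colon G\to G/H_n$, note that each quotient is discrete because $H_n$ is open and each $h_n$ is continuous, and use $\bigcap H_n=\{1\}$ to see that the diagonal map is injective. Your added remarks (normality of the $H_n$ in the Abelian case, and the observation that the map need not be a topological embedding) are accurate but not needed beyond what the paper records.
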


\begin{proof}
Since $\bigcap H_n=\{1\}$, the family $\mathscr H$ of natural homomorphisms $h_n\colon 
G\to G/H_n$ separates points of $G$. Each topological quotient $G/H_n$ is discrete (because the 
subgroups $H_n$ are open), and each $h_n$ is continuous. Therefore, the diagonal map $\Delta\mathscr 
H$ is a continuous monomorphism.
\end{proof}

\begin{lemma}
\label{lemma2}
Let $G$ be a Boolean topological group with zero $\zero$ which admits a continuous 
monomorphism $\varphi$ to the Cartesian product $\prod_{i\in \omega} D_i$ of discrete Boolean groups 
$D_i$ with the product topology, and let $\pi_n\colon \prod_{i\in \omega} D_i\to D_n$, $n\in 
\omega$, denote the canonical projections. Suppose that, for any neighborhood $U$ of $\zero$, there 
exists an $n$ such that $\pi_n(\varphi(U))$ is uncountable. Then there are two disjoint open sets 
$W_0$ and $W_1$ in $G$ such that $\zero\in \overline W_0\cap \overline W_1$. 
\end{lemma}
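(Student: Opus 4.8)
The plan is to first dispose of the topological‑group generalities, then reduce to a concrete combinatorial configuration inside $G$, and finally isolate the one step where the uncountability hypothesis is genuinely needed. Put $K_n=(\pi_n\circ\varphi)^{-1}(0)$; since $D_n$ is discrete and $\varphi$ is continuous, each $K_n$ is an open subgroup of $G$, and $\bigcap_nK_n=\ker\varphi=\{\zero\}$. Replacing $D_n$ by $G/(K_0\cap\dots\cap K_n)$ (as in Lemma~\ref{lemma1}) I may assume the $K_n$ are decreasing; the hypothesis survives, since "$\pi_n(\varphi(U))$ uncountable'' means "$U$ meets uncountably many cosets of $K_n$'', and refining the coset partition only increases the number of cosets a set meets. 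The hypothesis also forces $G$ to be nondiscrete (else $U=\{\zero\}$ has all projections trivial). Finally, by homogeneity the conclusion is equivalent to "$G$ is not extremally disconnected'': in an extremally disconnected group disjoint open sets have disjoint closures, whereas an open set $V$ with non-open closure has a boundary point $p$ of $\overline V$, and then $V$ and $G\setminus\overline V$ are disjoint open sets with $p$ in both closures, which a translation moves to $\zero$ (in a Boolean group, translation by $p$ sends $p$ to $\zero$). So I am free to build the witnessing pair anywhere.

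\textbf{The construction.} Using the hypothesis recursively, thin $\{K_n\}$ to a subsequence, renamed $H_0\supsetneq H_1\supsetneq\cdots$, with \emph{uncountable} successive quotients: given $H_k$, apply the hypothesis to the neighborhood $H_k$ to obtain an index, larger than that of $H_k$, at which $H_k$ meets uncountably many cosets, and let $H_{k+1}$ be the corresponding $K$. Then $\bigcap_kH_k=\{\zero\}$ and each $H_k/H_{k+1}$ is uncountable. Split $H_0\setminus\{\zero\}$ into the clopen shells $S_k=H_k\setminus H_{k+1}$; each $S_k$ is the union of an uncountable family $\mathcal C_k$ of cosets of $H_{k+1}$. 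For each $k$ choose a $2$-colouring $c_k\colon\mathcal C_k\to\{0,1\}$ and put $W_i=\bigcup_k\bigcup\{C\in\mathcal C_k:c_k(C)=i\}$ for $i=0,1$. Each $W_i$ is open, being a union of cosets of the open subgroups $H_{k+1}$, and $W_0\cap W_1=\emptyset$: within a shell the two colours pick out disjoint cosets, and different shells are disjoint. Note $W_0\cup W_1=H_0\setminus\{\zero\}$, so $\zero\notin W_0\cup W_1$.

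\textbf{The crux.} It remains to choose the colourings $c_k$ so that $\zero\in\overline{W_0}\cap\overline{W_1}$, equivalently so that every neighborhood of $\zero$ meets both colour classes; this is the heart of the proof and the place where uncountability cannot be weakened. Two things must be arranged together. First, every neighborhood $U$ of $\zero$ must, at some shell $S_k$, meet uncountably many \emph{distinct} members of $\mathcal C_k$. The hypothesis as stated only gives that $U$ meets uncountably many cosets of some $H_m$, and a priori all of them could lie inside a single coarser coset $C\in\mathcal C_k$, so that at every level $U$ appears concentrated in one coset; excluding this seems to require interleaving the choice of the $H_k$ with a bookkeeping over neighborhoods, always inserting enough intermediate open subgroups that the uncountable spread becomes a spread across distinct cosets — this is the step I expect to be the main obstacle. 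Second, granting the first point, one must $2$-colour each $(H_k/H_{k+1})\setminus\{0\}$, viewed as an $\mathbb F_2$-vector space, so that every uncountable subfamily of $\mathcal C_k$ that can arise as $\{C:C\cap U\ne\emptyset\}$ for a neighborhood $U$ meets both colours; since such subfamilies are constrained rather than arbitrary, a colouring of the type "fix a basis, split it into two halves each meeting the complement of every countable set of basis vectors, and colour a nonzero vector by the half containing the top of its support'' should serve. After rechecking that disjointness of $W_0$ and $W_1$ survives these choices, the lemma follows.
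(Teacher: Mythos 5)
Your proposal sets up essentially the same framework as the paper --- decompose a neighborhood of $\zero$ into the shells $S_k=H_k\setminus H_{k+1}$ determined by the open subgroups $H_k=\{x:\pi_i(x)=\zero_i \text{ for } i<k\}$ and $2$-colour each shell by cosets of $H_{k+1}$ --- but it stops exactly at the point where the work begins, and the two obstacles you name in ``The crux'' are genuine, not technicalities. Obstacle (a) is real: the hypothesis gives a level $n$ at which $U$ meets uncountably many cosets of $H_n$, and since each such coset (other than $H_n$ itself) lies in exactly one of the finitely many shells $S_0,\dots,S_{n-1}$, uncountably many of them crowd into a single shell $S_k$; but they may all sit inside \emph{one} coset of $H_{k+1}$, because $H_{k+1}/H_n$ can itself be uncountable. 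So a neighborhood really can trace out only one member of $\mathcal C_k$ in every shell, and your proposed fix --- interleaving the choice of the $H_k$ with ``bookkeeping over neighborhoods'' --- cannot be carried out, since $G$ has no countable base at $\zero$ to diagonalize against. If the traces of neighborhoods on $\bigcup_k\mathcal C_k$ happened to generate an ultrafilter, no colouring of the cosets could be bichromatic on every trace, and nothing in your argument rules this out. Obstacle (b) is likewise left as an unverified guess (``should serve''), with no description of which uncountable subfamilies can actually arise.

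The idea you are missing is that the witnesses in $U\cap W_\delta$ need not be points where $U$ itself spreads over many cosets: one should pass to a neighborhood $V$ with $8V\subset U$ and manufacture the witnesses as \emph{sums} of elements of $V$. Concretely, the paper takes the least $k$ with $\pi_k(V)$ uncountable, translates to get $W\subset V+V$ with $\pi_i(W)=\{\zero_i\}$ for $i<k$ and $\pi_k(W)$ uncountable, and applies the $\Delta$-system lemma to extract an uncountable family $\mathscr F\subset\pi_k(W)$ of sets of common size $m$ with common root $R$. Then a sum of two elements of $W$ with $k$-th coordinates in $\mathscr F$ has $k$-th support size $2(m-|R|)$, while a suitable sum of four has size $4(m-|R|)$; colouring a coset by the parity of the number of $2$'s in the prime factorization of the support size makes these two elements land in different colour classes, and both lie in $4W\subset 8V\subset U$. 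This simultaneously dissolves your obstacle (a) (no spread of $U$ across cosets is needed) and replaces your unverified colouring in (b) with an explicit one whose correctness is a one-line computation of $2$-adic valuations. As it stands, your proposal is an honest plan with the decisive step missing, not a proof.
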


\begin{proof}
For convenience, we identify $G$ with a subgroup of $\prod_{i\in \omega} D_i$, i.e., assume 
$\varphi$ to be the identity monomorphism. 

Treating the groups $D_i$ as vector spaces over $\mathbb 
F_2$, we choose a basis $E_i$ in each $D_i$, so that $D_i=[E_i]^{<\omega}$. Given a $g\in D_i$, by 
$|g|$ we denote its cardinality as a finite subset of $E_i$. For convenience, we denote the zero
element of $D_i$ (the empty subset of $E_i$) by $\zero_i$.

For each $k\in \omega$ and each $\delta \in \{0,1\}$, we set 
\begin{multline*}
W^k_\delta=\{(g_n)_{n\in \omega}\in G\subset \prod_{i\in \omega} D_i: 
\text{$x_i=\zero_i$ for $i<k$, $x_k\ne \zero$},\\ 
\text{the 
number of 2's in the prime factorization of $|x_k|$}\\ 
\text{has the parity of $\delta$.}\} 
\end{multline*}
Note that $W^k_\delta$ is a union of sets of the form $\{\zero_0\}\times \dots\times 
\{\zero_{k-1}\}\times \{g\}\times \prod_{i>k}D_i\cap G$, each of which is open in $G$, because $D_k$ 
is discrete. Hence all $W^k_\delta$ are open. Clearly, $W^k_0\cap W^m_1=\varnothing$ for any $k,m\in 
\omega$. Therefore, $W_0=\bigcup_{k\in \omega}W_0^k$ and $W_1=\bigcup_{k\in \omega}W_1^k$ are 
disjoint open sets in $G$. Let us show that $\zero\in \overline W_0\cap \overline W_1$.

Take any neighborhood $U$ of $\zero$ in $G$. We must prove that $U\cap W_0\ne \varnothing$ and  
$U\cap W_1\ne \varnothing$. Let $V$ be a neighborhood of $\zero$ for which $8V\subset U$,  
and let $k$ be the least nonnegative integer for which $\pi_k(V)$ is uncountable. We have $V\subset 
\prod_{n\in \omega}\pi_n(V)$. Since all $\pi_i(V)$, $i<k$, are at most countable and the number of 
$k$-tuples of elements of countable sets is countable, it follows that there exists a 
$k$-tuple $(g_0, \dots, g_{k-1})\in \prod_{i<k}\pi_i(V)$ and an uncountable set $V'\subset V$ such 
that, for any $(x_n)_{n\in \omega},(y_n)_{n\in \omega}\in V'$,  
$x_i=y_i=g_i$ for all $i<k$ and $x_i\ne y_i$ for all $i\ge k$. Take any $\mathbf g
\in V'$ and let $W=\{\mathbf g+\mathbf x: \mathbf x\in V'\setminus \{\mathbf\}\}$. Then $W\subset 
V+V$, $\pi_k(W)$ is uncountable, $\pi_i(W)=\{\zero\}$ for all $i<k$, and $\zero_k\notin \pi_k(W)$. 

The $k$th coordinates of elements of $W$ are different elements of the Boolean group $D_k$, i.e., 
finite subsets of its basis $E_k$. Among these finite sets are uncouncountably many different sets 
of the same cardinality $m$. By the $\Delta$-system lemma, there exists a finite $R\subset E_k$ and 
an uncountable $\mathscr F\subset \pi_k(W)$ such that, for any $F,G\in \mathscr F$, we have 
$|F|=|G|=m$ and $F\cap G=R$. Note that all symmetric differences $F\vartriangle G$, $F,G\in \mathscr 
F$, are pairwise disjoint; moreover, for any different $F,G\in \mathscr 
F$, we have 
$$
F\vartriangle G\in\pi_k(2W)\subset \pi_k(4V)\quad\text{and}\quad |F\vartriangle 
G|=2(m-|R|)>0.
$$ 
Let us somehow split $\mathscr F$ into two disjoint subfamilies $\mathscr F'$ and 
$\mathscr F''$. For any different $F',G'\in \mathscr F'$ and any different  
$F'',G''\in \mathscr F''$, we have 
$$
F'\vartriangle 
G'\vartriangle F''\vartriangle G''\in\pi_k(4W)\subset \pi_k(8V)
$$
and
$$
|F'\vartriangle G'\vartriangle F''\vartriangle G''|=4(m-|R|). 
$$ 
Let $l$ be the number of 2's in the prime factorization of $m-|R|$. For any 
pairwise different $\mathbf x', \mathbf y',  \mathbf x'', \mathbf y''\in W$ such that $\pi_k(\mathbf 
x'), \pi_k(\mathbf y')\in \mathscr F'$ and  $\pi_k(\mathbf x''), \pi_k(\mathbf y'')\in \mathscr 
F''$, we have either 
$\mathbf x'+ 
\mathbf y'+\mathbf x''+ \mathbf y''\in W_0^k$ and $\mathbf x'+\mathbf y'\in W_1^k$ (if $l$ is even) 
or $\mathbf x'+ 
\mathbf y'+\mathbf x''+ \mathbf y''\in W_1^k$ and $\mathbf x'+\mathbf y'\in W_0^k$ (if $l$ is odd). 
In any case, $\mathbf x'+ \mathbf y'\in 2W\subset 4V\subset 8V \subset  U$ and  $\mathbf 
x'+ \mathbf y'+\mathbf x''+ \mathbf y''\in 4W\subset 8V \subset  U$. Thus, 
$U\cap W_\delta^k\ne \varnothing$ for $\delta=0,1$. 

We have shown that any neighborhood of $\zero$ intersects both sets $W_0$ and $W_1$. This means that 
$\zero\in \overline W_0\cap \overline W_1$.
\end{proof}

\begin{proof}[Proof of the theorem]
Clearly, we can assume the subgroups $H_n$ in the statement of the theorem to be decreasing. By 
Lemma~\ref{lemma1}, there exists a continuous monomorphism $G\to \prod_{n\in \omega} D_n$, 
where all $D_n$ are discrete Boolean groups, and by Lemma~\ref{lemma2} and property (1) of 
extremally disconnected spaces, $G$ contains a neighborhood $U$ of $\zero$ such that $|\pi_n(U)|\le 
\omega$ for all $n\in \omega$. Clearly, the subgroup $\langle U\rangle $ generated by $U$ in $G$ is 
open, and $\phi(\langle U\rangle)\subset \prod_{n\in \omega} \langle \pi_n(U)\rangle $, where each 
$\langle \pi_n(U)\rangle$ is the (at most countable) subgroup of $D_n$ generated by $\pi_n(U)$. 
\end{proof}

One of the immediate consequences of this theorem is concerned with  
extremally disconnected groups with linear topology. 

\begin{definition*}
A topological group is said to have \emph{linear topology} if its open subgroups form a base of 
neighborhoods of the identity element. 
\end{definition*}

All (consistent) examples of extremally disconnected groups known to the author have linear 
topology. Moreover, when the free Boolean topological group of a space with one nonisolated point is 
extremally disconnected, its free group topology turns out to be linear.

\begin{corollary*}
If there exists a nondiscrete extremally disconnected group of Ulam nonmeasurable 
cardinality with linear topology, then there exists a nondiscrete extemally disconnected Boolean 
group with linear topology admitting a continuous isomorphism to a subgroup of $\mathbb Z_2^\omega$ 
with the product topology. 
\end{corollary*}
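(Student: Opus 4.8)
The plan is to reduce the hypothetical group to one fitting the hypotheses of the Theorem and then to push the countable discrete factors produced by the Theorem into $\mathbb Z_2^\omega$. Start from a nondiscrete extremally disconnected group $G$ of Ulam nonmeasurable cardinality with linear topology. First apply Malykhin's theorem to obtain an open Boolean subgroup $B\le G$. Since $B$ is open in $G$, it is extremally disconnected by property~(2); its topology is linear (the traces on $B$ of the open subgroups of $G$ contained in $B$ form a base at $\zero$); it is nondiscrete, since an isolated identity in $B$ would be isolated in $G$; and $B$ has Ulam nonmeasurable cardinality, because $|B|\le|G|$ and a $\sigma$-complete nonprincipal ultrafilter on $B$ would push forward along an inclusion into $|G|$ to one on $|G|$.

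Next invoke Isbell's theorem (property~(4)): being a nondiscrete extremally disconnected space of Ulam nonmeasurable cardinality, $B$ is not a $P$-space, so it has a nonopen $G_\delta$ set, i.e.\ a $G_\delta$ set failing to be a neighborhood of some of its points; translating, we may assume $\zero\in Q=\bigcap_n V_n$ with each $V_n$ open and $Q$ not a neighborhood of $\zero$. Linearity lets us choose, for each $n$, an open subgroup $H_n$ with $\zero\in H_n\subseteq V_n$; then $N:=\bigcap_n H_n$ is a subgroup contained in $Q$, hence $N$ is \emph{not} open. Form the quotient $B':=B/N$ with the quotient topology. The quotient homomorphism $q\colon B\to B'$ is continuous, open, and onto, so $B'$ is extremally disconnected by property~(3); it is Boolean; it is nondiscrete because $N$ is not open; it has linear topology, since for an open $W\ni\bar\zero$ the set $q^{-1}(W)$ contains an open subgroup $K$ of $B$ and then $q(K)\subseteq W$ is an open subgroup of $B'$ because $q^{-1}(q(K))=K+N$ is a subgroup of $B$ containing the open subgroup $K$; and, as $N\subseteq H_n$ for every $n$, each $q(H_n)=H_n/N$ is an open subgroup of $B'$ with $\bigcap_n q(H_n)=(\bigcap_n H_n)/N=\{\bar\zero\}$.

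Now $B'$ satisfies the hypotheses of the Theorem, which gives an open subgroup $\Gamma\le B'$ admitting a continuous isomorphism $\psi$ onto a subgroup of a product $\prod_{n\in\omega}D_n$ of countable discrete Boolean groups with the product topology. This $\Gamma$ is nondiscrete (an isolated identity in $\Gamma$ would be isolated in $B'$, as $\Gamma$ is open), extremally disconnected (open subspace of $B'$, property~(2)), Boolean, and of linear topology (open subgroup of a group with linear topology). It remains to map $\prod_n D_n$ continuously and injectively into $\mathbb Z_2^\omega$. Each $D_n$, being a countable Boolean group, carries an injective homomorphism $\iota_n\colon D_n\to\mathbb Z_2^\omega$ (send a basis of the $\mathbb F_2$-space $D_n$ injectively onto distinct standard unit vectors of $\mathbb Z_2^\omega$ and extend linearly), and the induced map $\iota:=\prod_n\iota_n\colon\prod_n D_n\to\prod_n\mathbb Z_2^\omega\cong\mathbb Z_2^\omega$ is injective; it is continuous, because the $\iota_n$-preimage of a set depending on finitely many coordinates of $\mathbb Z_2^\omega$ is an arbitrary subset of the \emph{discrete} group $D_n$, hence open, so the $\iota$-preimage of a subbasic open set of $\prod_n\mathbb Z_2^\omega$ is open. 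Composing, $\iota\circ\psi$ is a continuous isomorphism of $\Gamma$ onto a subgroup of $\mathbb Z_2^\omega$, and $\Gamma$ is the desired group.

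The main obstacle is the reduction: the family $\{H_n\}$ obtained in $B$ need not have trivial intersection, so one is forced to pass to the quotient by $N=\bigcap_n H_n$, and the whole argument hinges on $N$ being non-open — exactly what Isbell's theorem supplies — so that $B/N$ stays nondiscrete while still inheriting extremal disconnectedness (via the openness of $q$), the Boolean property, and linearity from $B$. The concluding embedding into $\mathbb Z_2^\omega$ is then routine, once one notes that only a continuous injection (not a topological embedding) is required and that discreteness of the $D_n$ makes the relevant preimages automatically open.
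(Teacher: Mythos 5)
Your proof is correct and follows essentially the same route as the paper: Malykhin's theorem to pass to an open Boolean subgroup, Isbell's theorem plus linearity to produce open subgroups with non-open intersection, the quotient by that intersection to get trivial intersection while preserving nondiscreteness and extremal disconnectedness via openness of the quotient map, the Theorem, and finally the coordinatewise embedding of the countable discrete factors into $\mathbb Z_2^\omega$. The only difference is that you spell out some details (translating the bad point of the $G_\delta$ set to $\zero$, linearity of the quotient) that the paper leaves implicit.
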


\begin{proof}
Suppose that there exists a nondiscrete extremally disconnected group of Ulam 
nonmeasurable cardinality with linear topology. According to Malykhin's theorem mentioned above it 
has an open Boolean subgroup $B$. By property (2) of extremally disconnected spaces $B$ is 
extremally disconnected; obviously, it is linear and nondiscrete and has Ulam nonmeasurable 
cardinality. 

In view of property (4) of extremally disconnected spaces and the linearity of $B$, there exist open 
subgroups $J_n$, $n\in \omega$, of $B$ whose intersection is not open (but closed, since any open 
subgroup is closed). The topological quotient $G= B/\bigcap_{n\in \omega} J_n$ is nondiscrete, and 
it is extremally disconnected by property (3) (because any quotient homomorphism of topological 
groups is open).  Clearly, $G$ has linear topology, and its subgroups $H_n=J_n/\bigcap_{n\in \omega} 
J_n$ are open and satisfy the condition $\bigcap_{n\in \omega} H_n=\{\zero\}$ (here $\zero$ is the 
zero element of $G$). By the theorem $G$ has an open (and hence nondiscrete and extremally 
disconnected) subgroup $H$ for which there exists a continuous monomorphism $\varphi \colon H\to 
\prod_{n\in \omega}D_n$ from $H$ to the topological product of discrete countable Boolean groups 
$D_n$. Clearly, the induced topology of $H$ is linear. Each group $D_n$ is isomorphic to the 
countable subgroup $\bigoplus^\omega \mathbb Z_2$ of $\mathbb Z_2^\omega$; let $\psi_n$ denote the 
corresponding isomorphism. Since the discrete topology of $D_n$ is coarser than the topology unduced 
by the product topology, it follows that the Cartesian product $\psi=\prod_{n\in \omega}\psi_n\colon 
\prod_{n\in \omega} D_n\to \bigl(\bigoplus^\omega \mathbb Z_2\bigr)^\omega\subset \mathbb Z_2 
^\omega$ is a continuous monomorphism. Therefore, so is $\psi \circ \varphi\colon G\to \mathbb 
Z_2^\omega$. 
\end{proof}


\begin{thebibliography}{0}


\bibitem{Isbell}
J. R. Isbell, ``Zero-dimensional spaces,'' 
Tohoku Math. J. (2), \textbf{7} (1-2), 1--8, (1955).

\bibitem{Arhangelskii}
 A. Arhangelski, 
``Groupes topologiques extr\'emalement discontinus,'' 
C. R. Acad. Sci. Paris S\'er. A-B \textbf{265}, A822--A825 (1967).

\bibitem{RS}
E. Reznichenko and O. Sipacheva, 
``Discrete subsets in topological groups
and countable extremally disconnected groups,''
Proc. Amer. Math. Soc. \textbf{149}, 2655--2668 (2021).

\bibitem{Malykhin}
 V. I. Malykhin, 
``Extremally disconnected and similar groups,'' 
Soviet Math. Dokl. \textbf{16},
21--25  (1975).

\end{thebibliography}
\end{document}